\newcommand{\EM}{Erd\H{o}s-Moser}
\newcommand{\EME}{\EM\ equation}
\DeclareMathOperator{\lcm}{lcm}
\newcommand{\emdash}{\hspace{2pt}\textemdash\hspace{2pt}}
\newcommand{\K}{K\hspace{-0.1em}}
\newcommand{\subPK}{p\hspace{0.1em}\mid\hspace{0.05em}K}
\newcommand{\subPk}{p\hspace{0.1em}\mid\hspace{0.05em}k}
\newcommand{\subPL}{p\hspace{0.1em}\mid\hspace{0.05em}L}
\newcommand{\ie}{\textit{i.e.,\ }}
\newcommand{\ppp}{primary pseudoperfect number}
\newcommand{\PPN}{PPN}
\newcommand{\AP}{AP}
\theoremstyle{plain}
\newtheorem{thm}{Theorem}
\newtheorem{prop}{Proposition}
\theoremstyle{definition}
\newtheorem{dfn}{Definition}
\newtheorem{cnj}{Conjecture}
\newtheorem*{notation*}{Notation}
\newtheorem*{remark*}{Remark}
\begin{document}
\title{Primary Pseudoperfect Numbers,\\Arithmetic Progressions, \\and the Erd\H{o}s-Moser Equation}
\markright{Primary Pseudoperfect Numbers and Progressions}
\author{Jonathan Sondow and Kieren MacMillan}

\begin{abstract}
A \emph{\ppp} (\PPN) is an integer $K > 1$ satisfying the equation
\begin{equation*}
	\frac{1}{K} + \sum_{\subPK} \frac{1}{p} = 1,
\end{equation*}
where $p$ denotes a prime.
PPNs arise in studying perfectly weighted graphs  and singularities of algebraic surfaces, and are related to Sylvester's sequence, Giuga numbers, Zn\'am's problem, the inheritance problem, and Curtiss's bound on solutions of a unit fraction equation.

Here we show $K \equiv 6 \! \pmod{6^2}$ if $6\mid K$, and uncover a remarkable $7$-term arithmetic progression of residues modulo $6^2\cdot8$ in the sequence of known PPNs. On that basis, we pose a conjecture which leads to a conditional proof of the new record lower bound $k>10^{3.99\times10^{20}}$ on any non-trivial solution to the Erd\H{o}s-Moser Diophantine equation $1^n + 2^n + \dotsb + k^n = (k+1)^n$.
\end{abstract}

\maketitle

\section{{\bf INTRODUCTION.}}  \label{SEC: INTRO}
In 1922 Curtiss~\cite{Curtiss} proved Kellogg's \cite{Kellogg} conjectured bound on solutions to a \emph{unit fraction} equation
\begin{equation}
	\sum_{i=1}^n\frac{1}{x_i} = 1 \quad \implies \quad \max_{1\, \le\, i\,\le n} x_i \le S_n - 1 \label{EQ: Curtiss bound}
\end{equation}
where \emph{Sylvester's sequence} \cite{Anne, Sylvester, wiki}, \cite[A000058]{Sloane},
\begin{equation}  \label{EQ: SS}
\begin{aligned}
	S_n = 2, 3, 7, 43, 1807, 3263443, 10650056950807,
		  113423713055421844361000443,\dotsc,
\end{aligned}
\end{equation}
is defined by the recurrence $S_n = S_1S_2\dotsb S_{n-1}+1$, with $S_1=2$.

The equation in \eqref{EQ: Curtiss bound} also appears in finite group theory. Suppose we have a finite group $G$, and assume it has conjugacy classes $C_1,\dotsc,C_n$. The number of elements of $C_i$ divides the order $N$ of $G,$ so we can write $\#C_i = N/m_i$ with $m_i$ an integer and
\begin{equation*}
	N = \#C_1 + \dotsb + \#C_n = \frac{N}{m_1} + \dotsb + \frac{N}{m_n}.
\end{equation*}
It follows that $1 = \sum_i 1/m_i$. Curtiss's result now says that the number of groups with a prescribed number $n$ of conjugacy classes is finite. For more on this, see Landau \cite{Landau} or Lenstra \cite{Lenstra}.

The present article is concerned with the particular unit fraction equation
\begin{equation}
	\frac{1}{K} + \sum_{\subPK} \frac{1}{p} = 1.  \label{EQ: PPP def}
\end{equation}
Here and throughout the paper, $p$ denotes a prime. Equation~\eqref{EQ: PPP def} is related to perfectly weighted graphs \cite{Butske} and singularities of algebraic surfaces \cite{BH}. The companion equation
\begin{equation*}
	\frac{-1}{L} + \sum_{\subPL} \frac{1}{p} = 1
\end{equation*}
occurs in the study of \emph{Giuga numbers} \cite{BBBG,ST}, \cite[A17]{Guy}, \cite[A007850]{Sloane}, and a generalization of \eqref{EQ: PPP def},
\begin{equation*}
	\prod_{i=1}^r \frac{1}{x_i} + \sum_{i=1}^r \frac{1}{x_i} = 1,
\end{equation*}
arises in \emph{Zn\'am's problem} \cite{BV, Cao}, \cite[A075461]{Sloane} and \emph{the inheritance problem} \cite{Anne}. See also \cite{ACF} for recent work on the equation in \eqref{EQ: Curtiss bound}.

In Section~2, we summarize the known facts about solutions to the unit fraction equation~\eqref{EQ: PPP def}. In Section~\ref{SEC: APS}, we reduce the solutions modulo $288$ and uncover a remarkable $7$-term arithmetic progression of residues, leading to two conjectures. In the final section, we relate solutions of~\eqref{EQ: PPP def} to possible solutions of the \emph{Erd\H{o}s-Moser Diophantine equation}
\begin{equation}
	1^n + 2^n + \dotsb + (k-1)^n+k^n = (k+1)^n.  \label{EQ: EME}
\end{equation}
Assuming a weak form of one of our conjectures, we give a conditional proof of a new record lower bound on any non-trivial solution of \eqref{EQ: EME}.

\section{{\bf PRIMARY PSEUDOPERFECT NUMBERS.}}  \label{SEC: PPNS}

Recall that a positive integer is called \emph{perfect} if it is the sum of \emph{all} of its proper divisors, and \emph{pseudoperfect} if it is the sum of \emph{some} of its proper divisors~\cite[B1, B2]{Guy}, \cite[A000396, A005835]{Sloane}.

\begin{dfn}[{\bf Butske, Jaje, and Mayernik \cite{Butske}}]  \label{DFN: ppp}
A \emph{\ppp} (\PPN{} for short) is an integer $K > 1$ that satisfies the unit fraction equation~\eqref{EQ: PPP def}. See \cite{pm, Weisstein, wiki} and \cite[A054377]{Sloane}. Note that, just as $1$ is not a prime number, so too $1$ is not a PPN.
\end{dfn}

Multiplying equation \eqref{EQ: PPP def} by $\K$ gives the equivalent integer condition
\begin{equation}
	1 + \sum_{\subPK} \frac{K}{p} = K. \label{EQ: integer def}
\end{equation}
For example, $42=2\cdot3\cdot7$ is a \PPN{}, because  $42/2=21,\ 42/3=14,\ 42/7=6,$ and $1 + 21+14+6=42$. From \eqref{EQ: integer def}, we see that {\em all \PPN{s} are square-free}, and that {\em every \PPN{} except $2$ is pseudoperfect}. As with perfect numbers, it is unknown whether there are infinitely many PPNs or any odd ones.

\begin{notation*}
For an integer $r \ge 1$, we denote by $\K_r$ any \PPN{} with exactly $r$ (distinct) prime factors.
\end{notation*}

Remarkably, there exists precisely one $K_r$ for each positive integer $r\le 8$. This was conjectured by Ke and Sun \cite{Ke} and Cao, Liu, and Zhang \cite{Cao}, and then verified in  \cite{Butske} (see also Anne \cite{Anne}) using computational search techniques. Table~$1$ lists all known \PPN{s} and their prime factors.

\begin{center}
\begin{table}[ht] \label{TABLE: PPPs}
\begin{tabular}{|c|r|l|}
	\hline
	\textit{r} & \textit{$\K_r$} \hspace{7em} &  \hspace{2.8em}Prime Factorization  \\
	\hline
	$1$ & $2$ & $2$  \\
	$2$ & $6$ & $2\cdot3$  \\
	$3$ & $42$ & $2\cdot3\cdot7$  \\
	$4$ & $1806$ & $2\cdot3\cdot7\cdot43$  \\
	$5$ & $47058$ & $2\cdot3\cdot11\cdot23\cdot31$  \\
	$6$ & $2214502422$ & $2\cdot3\cdot11\cdot23\cdot31\cdot47059$  \\
	$7$ & $52495396602$ & $2\cdot3\cdot11\cdot17\cdot101\cdot149\cdot3109$  \\
	$8$ & $8490421583559688410706771261086$ & $2\cdot3\cdot11\cdot23\cdot31\cdot 47059 \cdot\, 2217342227\cdot1729101023519$  \\
	\hline
\end{tabular}
\vspace{0.5em}\caption{The \ppp{s} with $r\le8$ prime factors}
\end{table} 
\end{center}
\vspace{-3.25em}

Here are five related observations on Table~1 and Sylvester's sequence~\eqref{EQ: SS}.
\begin{enumerate}
	\item[(a).] $K_1 = 2$, $K_2= 2\cdot3=6$, $K_3= 6\cdot7=42$, and $K_4 = 42\cdot43=1806$, but $K_5 \ne 1806\cdot1807$.
	\item[(b).] $K_5 = 47058$ and $K_6 = 47058 \cdot 47059 = 2214502422$, but $K_7\ne 2214502422\cdot2214502423$.
	\item[(c).] $K_6 = 2214502422$ and $\K_8=2214502422\cdot2217342227\cdot1729101023519$.
	\item[(d).] $K_1,K_2,K_3,K_4=2,6,42,1806$ are each $1$ less than the terms $S_2,S_3,S_4,S_5=3, 7, 43, 1807$.
	\item[(e).] $K_r < S_{r+1}$, for $r=1,2,\dotsc,8$.
\end{enumerate}

These patterns can all be explained.

\begin{prop}  \label{PROP: primes to PPNs}
For any integer $K$, set $K':=K(K+1)$.
\begin{enumerate}
	\item Assume that $K+1$ is prime. Then $K$ is a \PPN{} if and only if $K'$ is also a \PPN{}.
	\item Assume that we can factor $K^2+1=(p-K)(q-K)$, for some primes $p>K$ and $q>K$. Then $K$ is a \PPN{} if and only if $K\cdot p\cdot q$ is also a \PPN{}.
	\item If $K+1=S_n$ is a term in Sylvester's sequence, then $K'+1=S_{n+1}$ is the next term in it.
	\item The inequality $K_r \le S_{r+1}-1$ holds for {\em any} \PPN{} with $r\ge1$ prime factors.
\end{enumerate}
\end{prop}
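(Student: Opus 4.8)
The plan is to settle parts~(1)--(3) by elementary manipulation of equation~\eqref{EQ: PPP def} together with the Sylvester recurrence, and to derive part~(4) from Curtiss's bound~\eqref{EQ: Curtiss bound}. For (1) and (2) I would isolate a single \emph{telescoping identity}. Suppose $m>1$ is coprime to $K$, that no prime factor of $m$ divides $K$, and that moreover
\[
	\frac{1}{Km} + \sum_{q \mid m}\frac{1}{q} = \frac{1}{K}.
\]
Since then the prime factors of $Km$ are exactly those of $K$ together with those of $m$,
\[
	\frac{1}{Km} + \sum_{p \mid Km}\frac{1}{p}
	= \Bigl(\frac{1}{Km} + \sum_{q\mid m}\frac{1}{q}\Bigr) + \sum_{p\mid K}\frac{1}{p}
	= \frac{1}{K} + \sum_{p\mid K}\frac{1}{p},
\]
so $Km$ satisfies \eqref{EQ: PPP def} exactly when $K$ does; as $Km>1$ automatically, this means $Km$ is a PPN iff $K$ is (for $K\ge 2$, the only case of interest). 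It then remains only to verify the displayed identity in each case. In (1), take $m=K+1$: it is prime and exceeds $K$, hence does not divide $K$, and the identity reads $\frac{1}{K(K+1)}+\frac{1}{K+1}=\frac{1+K}{K(K+1)}=\frac1K$. In (2), take $m=pq$: the primes $p,q>K$ are distinct (if $p=q$ then $K^2+1$ would be a perfect square, impossible for $K\ge1$) and neither divides $K$, while expanding $K^2+1=(p-K)(q-K)$ gives $pq=K(p+q)+1$, which is precisely $\frac{1}{Kpq}+\frac1p+\frac1q=\frac{1+K(p+q)}{Kpq}=\frac1K$.

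For (3) I would first record the closed recurrence $S_{n+1}=S_n^2-S_n+1$, which is immediate from the definition via $S_1\cdots S_n=(S_1\cdots S_{n-1})S_n=(S_n-1)S_n$. Then $K+1=S_n$ yields $K'+1=K(K+1)+1=(S_n-1)S_n+1=S_n^2-S_n+1=S_{n+1}$. Combining the forward direction of (1) with (3) and iterating from $K_1=2=S_2-1$ explains patterns~(c) and~(d); the chain stops because $S_5=1807=13\cdot139$ is not prime, which is also why $K_5\ne 1806\cdot1807$.

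Part (4) is where an external input is needed. Let $K$ be a PPN with prime factors $p_1<\dots<p_r$. Since every PPN is square-free, $K=p_1\cdots p_r$, and \eqref{EQ: PPP def} exhibits the $r+1$ positive integers $K,p_1,\dots,p_r$ as denominators in a unit fraction decomposition $\frac1K+\frac1{p_1}+\dots+\frac1{p_r}=1$. Applying Curtiss's bound~\eqref{EQ: Curtiss bound} with $n=r+1$ gives $\max\{K,p_1,\dots,p_r\}\le S_{r+1}-1$; and this maximum equals $K$, because $K$ is the product of the $p_i$ and so $K\ge p_i$ for every $i$ (with $K=p_1=2$ when $r=1$). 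Hence $K\le S_{r+1}-1$ in all cases, and observation~(e) follows at once.

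I do not anticipate a serious obstacle; the argument is essentially bookkeeping once Curtiss's bound is cited. The points that deserve a little care are: (i) checking in (1) and (2) that the newly adjoined prime(s) really are new to $K$ — and, in (2), mutually distinct — so that the prime sets partition as claimed in the telescoping step; (ii) the harmless edge cases $K=1$ (where "is a PPN" and "satisfies \eqref{EQ: PPP def}" diverge) and $r=1$; and (iii) confirming that $K$, rather than some $p_i$, realizes the largest denominator when Curtiss's bound is invoked, so that the resulting inequality is genuinely a bound on $K$.
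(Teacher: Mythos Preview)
Your argument is correct and follows essentially the same route as the paper: part~(i) via the identity $\tfrac{1}{K(K+1)}+\tfrac{1}{K+1}=\tfrac{1}{K}$ (the paper writes it as $\tfrac{1}{K'}=\tfrac{1}{K}-\tfrac{1}{K+1}$), part~(iii) via the Sylvester recurrence $S_{n+1}=(S_n-1)S_n+1$, and part~(iv) via Curtiss's bound applied to the $r+1$ denominators $K,p_1,\dots,p_r$. Your treatment of~(ii) is actually more explicit than the paper's, which simply declares the proof ``similar'' and defers to references; your unified telescoping lemma packaging (i) and (ii) together, and your checks that $p\neq q$ and that the new primes are genuinely new, are exactly the bookkeeping those references supply.
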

\begin{proof}
(i). This follows easily from Definition \ref{DFN: ppp} and the relation $\frac{1}{K'} = \frac{1}{K} - \frac{1}{K+1}$.\\
(ii). The proof is similar; for details, see Brenton and Hill's more general Proposition~12 in \cite{BH}, as well as \cite[Lemma~2]{Anne} and \cite[Lemma~4.1]{Butske}.\\
(iii). Sylvester's sequence satisfies $S_{n+1}=(S_n-1)S_n+1$. Setting $S_n=K+1$ gives (iii).\\
(iv). This follows directly from Curtiss's bound~\eqref{EQ: Curtiss bound}.
\end{proof}

Now, as $3,7,43,47059$ are prime, but $1807=13\cdot139$ and $2214502423=7^2\cdot45193927$ are composite, and as the numbers $2217342227$ and $1729101023519$ in the factorization
\begin{equation*}
	2214502422^2 + 1 = (2217342227 - 2214502422)(1729101023519 - 2214502422)
\end{equation*}
are prime, the observations (a), (b), (c), (d), and (e) are explained.

Analogs of (i) and (ii) for $K-1$ and $K^2-1$, involving \PPN{s} and Giuga numbers, are given in \cite[Theorem~8]{ST}.

\section{{\bf PPNs AND ARITHMETIC PROGRESSIONS.}}  \label{SEC: APS}

According to Table~$1$, the \PPN{s} having $r=2,3,4,5,6,7,8$ prime factors, \ie
\begin{align*}
	K_r =\ & 6, 42, 1806, 47058, 2214502422, 52495396602, 8490421583559688410706771261086,
\end{align*}
are all multiples of $2\cdot3=6$:
\begin{align*}
	\frac{K_r}{6} =\ &1, 7, 301, 7843, 369083737, 8749232767,  1415070263926614735117795210181.
\end{align*}

\begin{prop} \label{PROP: K == 6 mod 36}
Let $K$ be {\em any} \PPN{} divisible by $6$. Then $K \equiv 6\!\pmod{6^2}$.
\end{prop}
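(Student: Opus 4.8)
The plan is to work with the integer form \eqref{EQ: integer def} of the defining equation and extract congruence information prime-by-prime. Write $K = 6m$ with $m$ a positive integer; since every \PPN{} is square-free, $m$ is coprime to $6$, and the prime factors of $K$ are $2$, $3$, and the (distinct) prime factors of $m$. Substituting into \eqref{EQ: integer def} gives
\begin{equation*}
	1 + 3m + 2m + \sum_{\subPk} \frac{6m}{p} = 6m,
\end{equation*}
so that $1 + 5m + 6\sum_{\subPk}(m/p) = 6m$, i.e. $6\sum_{\subPk}(m/p) = m - 1$. The immediate consequence is $m \equiv 1 \pmod 6$, which already yields $K = 6m \equiv 6 \pmod{6^2}$ if we can additionally rule out... wait, $m \equiv 1 \pmod 6$ gives $6m \equiv 6 \pmod{36}$ directly. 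So in fact the whole proposition reduces to the single observation that $m-1 = 6\sum_{\subPk}(m/p)$ is divisible by $6$.

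So the first and main step is simply to verify that $m - 1$ is a multiple of $6$, which is forced the instant we rewrite \eqref{EQ: integer def} after pulling out the contributions of the primes $2$ and $3$: those contribute $K/2 + K/3 = 3m + 2m = 5m$, the summand for every other prime $p \mid m$ is $6m/p$ (an integer, and visibly a multiple of $6$ since $p \nmid 6$), and the isolated $1$ makes up the difference. Rearranging, $m - 1 = 6\sum_{p \mid m}(m/p)$, hence $6 \mid m-1$, hence $K = 6m \equiv 6 \pmod{36}$. I should double-check the edge case $m = 1$ (i.e.\ $K = 6$ itself): then the sum over primes $p \mid m$ is empty, $m - 1 = 0$, and indeed $6 \mid 0$, consistent with $6 \equiv 6 \pmod{36}$.

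The only subtlety worth flagging — and the place where I'd be most careful writing it up — is the justification that each term $6m/p$ for $p \mid m$, $p \notin \{2,3\}$, is genuinely an integer divisible by $6$: this uses square-freeness of $K$ (so $p \parallel K$, giving $p \mid m$ and $m/p \in \mathbb{Z}$) together with $\gcd(p,6) = 1$ (so the factor of $6$ survives). Both facts are already established in the excerpt: square-freeness follows from \eqref{EQ: integer def} as noted after Definition~\ref{DFN: ppp}, and $\gcd(p,6)=1$ is automatic once we have separated off the primes $2$ and $3$. With that in hand the argument is a one-line manipulation; there is no real obstacle, and the proposition is essentially an unpacking of \eqref{EQ: integer def} modulo $6$.
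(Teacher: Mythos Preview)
Your argument is correct and follows essentially the same route as the paper: separate the contributions of $p=2$ and $p=3$ in the integer equation~\eqref{EQ: integer def}, observe that every remaining term $K/p$ is a multiple of $6$, and read off $K/6\equiv 1\pmod{6}$. The paper phrases this via the parity of the count $\mu$ of prime factors $\equiv -1\pmod 6$, whereas you rearrange directly to $m-1 = 6\sum_{p\mid m}(m/p)$; your formulation is a touch more streamlined but the underlying idea is identical (do note the small notational slip: your displayed sum should range over $p\mid m$, not $p\mid k$).
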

\begin{proof}
Denote by $\mu\ (\ge0)$ the number of prime factors of $K$ congruent to $-1$ modulo $6$. Since $6 \mid K$ and $K$ is square-free, $\frac{K}{6}\equiv (-1)^{\mu}\!\pmod{6}$. Now, reducing equation~\eqref{EQ: integer def} modulo $6$ gives
\begin{equation}
	1 + \frac{K}{2} + \frac{K}{3} + \sum_{3\mspace{2mu}<\mspace{3mu}p\mspace{3mu}\mid\mspace{1mu} K} \frac{K}{p}=K  \implies  1 + 3(-1)^{\mu}+ 2(-1)^{\mu} \equiv 0 \!\pmod{6} \label{EQ: K/6}
\end{equation}
and hence $\mu$ is even. This proves the proposition.
\end{proof}

In particular, for $r=2,3,4,5,6,7,8$ we find respectively that
\begin{align*}
	\frac{K_r-6}{6^2} &= 0, 1, 50, 1307, 61513956, 1458205461, 235845043987769122519632535030.
\end{align*}

Let us write $N\!\!\pmod{M} = R$ if the remainder upon division of $N$ by $M$ is $R$, so that both the congruence $N\equiv R\!\pmod{M}$ and the inequalities $0\le R<M$ hold. In light of Proposition~\ref{PROP: K == 6 mod 36} and the values $(K_2,K_3)=(6,42)$, one might predict that if we divide $K_2,\dotsc,K_8$ by some number $M$\hspace{-0.1em}, the remainders will form the arithmetic progression (\AP{} for short)
\begin{equation} \label{EQ:modM}
K_r\mspace{-15mu}\pmod{M}=6, 42, 78, 114,  150, 186,222, \text{ for }r=2,3,4,5,6,7,8,
\end{equation}
respectively. This requires $M$ to exceed $222$ and to divide each of the differences
\begin{align*}
	1806-78  =1728&=2^6\cdot3^3,  \\
	47058-114 = 46944 &= 2^5\cdot3^2\cdot163,  \\
	2214502422 - 150=2214502272&=2^7\cdot3^2\cdot89\cdot21599,  \\
	52495396602-186=52495396416&=2^6 \cdot3^2\cdot 47\cdot 1939103,  \\
	8490421583559688410706771261086 -222 &= 8490421583559688410706771260864  \\[-0.25em]
		&= 2^6\cdot3^2\cdot338293 \cdot43572628606668095873923.
\end{align*}
Since their greatest common divisor is $2^5\cdot3^2=288>222$, and no proper factor of $288$ exceeds $222$, the choice $M=288=6^2\cdot 8$ is both necessary and sufficient. This establishes a remarkable property of these \PPN{s}.

\begin{prop} \label{PROP: PPP AP}
Upon division of the \ppp{s} $K_2$, $K_3$, $K_4$, $K_5$, $K_6$, $K_7$, $K_8$ by $M=288$, the remainders form the $7$-term arithmetic progression \eqref{EQ:modM}, that is,
\begin{equation} \label{EQ:mod288}
	  K_r\mspace{-15mu}\pmod{6^2\cdot 8}= 6+6^2(r-2)\ \text{for}\ r=2,3,4,5,6,7,8.
\end{equation}
Moreover, no other modulus will do.
\end{prop}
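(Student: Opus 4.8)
The plan is to prove the two halves of the proposition separately: that the residues listed in \eqref{EQ:mod288} are actually attained when the modulus is $M=6^2\cdot 8=288$, and that no modulus other than $288$ produces the progression \eqref{EQ:modM}.

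For the first half, the straightforward route is to reduce each of the seven tabulated integers $K_2,\dots,K_8$ modulo $288$ and confirm that the remainders are $6,42,78,114,150,186,222$ respectively; equivalently, that $288$ divides $K_r-\bigl(6+6^2(r-2)\bigr)$ for $r=2,\dots,8$. This can be shortened with Proposition~\ref{PROP: K == 6 mod 36}: since every one of $K_2,\dots,K_8$ is divisible by $6$, each is automatically $\equiv 6\pmod{6^2}$, and so is $6+6^2(r-2)$, whence the factor $36$ of $288$ already divides the difference. Because $\lcm(36,32)=288$, it then remains only to check the seven congruences modulo $32$, where $6+6^2(r-2)\equiv 6+4(r-2)\pmod{32}$\emdash a brief calculation reducing the (large) integers $K_r$.

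For the second half, let $M$ be any modulus with $K_r\!\!\pmod M=6+6^2(r-2)$ for all $r=2,\dots,8$, in the remainder sense fixed just before the proposition, so that $0\le 6+6^2(r-2)<M$ for each $r$. Taking $r=8$ forces $M>222$. Moreover, each congruence $K_r\equiv 6+6^2(r-2)\pmod M$ means $M\mid K_r-\bigl(6+6^2(r-2)\bigr)$; the cases $r=2,3$ are vacuous because $K_2=6$ and $K_3=42$, while $r=4,5,6,7,8$ tell us that $M$ divides each of the five differences $1806-78$, $47058-114$, $2214502422-150$, $52495396602-186$, and $8490421583559688410706771261086-222$ that appear just above the proposition. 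Hence $M$ divides their greatest common divisor, which by the factorizations exhibited there equals $2^5\cdot 3^2=288$. So $M$ is a divisor of $288$ exceeding $222$; since the divisors of $288=2^5\cdot 3^2$ are $1,2,3,4,6,8,9,12,16,18,24,32,36,48,72,96,144,288$ and only the last of these exceeds $222$, we conclude $M=288$. Together with the first half, this shows $288$ works and is the only modulus that does.

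I do not expect a real obstacle here, only arithmetic bookkeeping: the modular reductions of the thirty-one-digit $K_8$ must be carried out with care, and the uniqueness argument should make explicit that it rests on the convention $0\le R<M$ (which is what forces $M>222$) together with the elementary fact that $288$ has no divisor strictly between $222$ and $288$.
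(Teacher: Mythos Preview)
Your proposal is correct and follows essentially the same route as the paper: the argument preceding the proposition in the text establishes both halves at once by computing the five differences $K_r-\bigl(6+6^2(r-2)\bigr)$ for $r=4,\dots,8$, observing their $\gcd$ is $2^5\cdot3^2=288$, and noting that no proper divisor of $288$ exceeds $222$, so $M=288$ is simultaneously sufficient (it divides every difference) and necessary. Your separation into two halves and the optional shortcut via Proposition~\ref{PROP: K == 6 mod 36} to handle the factor $36$ are harmless elaborations of the same computation.
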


Notice that the inequalities
\begin{equation*}
	6+6^2\cdot (9-2) = 258 < 288 < 294=6+6^2\cdot(10-2)
\end{equation*}
hold. Thus, the remainder pattern in \eqref{EQ:mod288} might persist for $r=9$ (assuming that a $K_9$ exists), but cannot for $r\ge10$. Throwing caution to the wind, we therefore make the following prediction.

\begin{cnj} \label{CNJ: PPP9}
There exists exactly one \ppp{} $\K_9$ with nine prime factors, and $\K_9\!\pmod{6^2\cdot 8}= 258$ holds. No further \PPN{s} exist.
\end{cnj}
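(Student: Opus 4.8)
The conjecture bundles three assertions of very unequal difficulty, and the plan is to separate them. The new content beyond the excerpt is: (a) a nine-factor PPN $K_9$ exists and is unique; (b) $K_9 \equiv 258 \pmod{6^2 \cdot 8}$; and (c) ``no further PPNs exist,'' i.e.\ no PPN has ten or more prime factors (uniqueness for $r \le 8$ being already established in the literature). I would attack these in increasing order of difficulty.

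For existence and uniqueness of $K_9$, I would invoke Curtiss's bound in the form of Proposition~\ref{PROP: primes to PPNs}(iv): any $K_9$ satisfies $K_9 \le S_{10} - 1$. Since $S_{10}$ is a fixed (if enormous) integer, only finitely many candidates remain, and the problem is in principle decidable by the same exhaustive tree search that Butske, Jaje, and Mayernik~\cite{Butske} carried out for $r \le 8$. Concretely, one builds $K$ one prime factor at a time: equation~\eqref{EQ: integer def} forces $2 \mid K$, the running value of $1 - \sum_{p \mid K} 1/p$ constrains each successive prime, and the square-free condition prunes the tree aggressively. Existence is settled by exhibiting one such $K_9$; uniqueness, by verifying the search is complete. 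The obstacle here is purely computational: $S_{10}$ is of size roughly $10^{104}$, so the search is vastly heavier than the $r = 8$ case and would demand heavy optimization of the pruning and likely distributed computation.

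The congruence $K_9 \equiv 258 \pmod{288}$ I would handle in one of two ways. The cheap route is to compute $K_9$ in the previous step and reduce it modulo $288$. The ambitious route is to prove a general residue theorem refining Proposition~\ref{PROP: K == 6 mod 36}: that every PPN divisible by $6$ with exactly $r$ prime factors has remainder exactly $6 + 36(r-2)$ on division by $288$. This would be doubly valuable, since that remainder lies in $[0,288)$ only when $r \le 9$, so such a theorem would at a stroke give both $K_9 \equiv 258$ and the nonexistence of ten-or-more-factor PPNs divisible by $6$. By the Chinese Remainder Theorem the claim splits into $K \equiv 6 \pmod 9$ (already contained in Proposition~\ref{PROP: K == 6 mod 36}) and $K \equiv 4r - 2 \pmod{32}$, the latter being the real content. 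Writing $K = 2m$ with $m$ odd and reducing~\eqref{EQ: integer def} modulo $32$ yields $m \equiv 1 + 2\sum_{q \mid m}(m/q) \pmod{32}$, tying the residue to the reciprocal sum over the odd prime factors of $m$. I expect this route to fail as a theorem: the right-hand sum depends on the actual primes, not merely on their count $r$, so the clean linear dependence on $r$ is almost surely a coincidence of the small cases and not a forced identity. This is exactly why the statement is posed as a conjecture rather than a proposition.

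The assertion that \emph{no} PPN has ten or more prime factors is the genuine obstacle. For each \emph{fixed} $r$, Curtiss's bound again gives finiteness, but the ceiling $S_{r+1}-1$ grows doubly exponentially, so exhaustive search is hopeless beyond small $r$; and one must in any case dispose of \emph{all} $r \ge 10$ at once, infinitely many values. The only clean mechanism I can see for this is precisely the ambitious residue theorem of the previous paragraph, which would cap $r$ at $9$ automatically---but, as argued, I do not believe it holds. A direct approach would instead require some \emph{uniform} bound on the number of prime factors of a PPN, and none is known. Moreover the bootstrapping constructions of Proposition~\ref{PROP: primes to PPNs}(i),(ii) manufacture a larger PPN from a smaller one whenever $K+1$ is prime or $K^2+1$ factors as $(p-K)(q-K)$ with $p,q$ prime, so heuristically one expects infinitely many PPNs, making this the most doubtful clause of the conjecture. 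The remark following Proposition~\ref{PROP: PPP AP}, that $6+36(r-2) > 288$ once $r \ge 10$, does \emph{not} help: it shows only that the specific progression of residues cannot continue within range, not that a ten-factor PPN fails to exist. Barring a genuinely new structural idea that caps the number of prime factors, this part of the conjecture is, in my assessment, out of current reach.
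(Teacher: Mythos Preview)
The statement you were given is a \emph{conjecture}, not a theorem: the paper does not prove it and does not claim to. Immediately after stating it, the authors write only that ``Anyone thinking of settling Conjecture~\ref{CNJ: PPP9} by computation should be aware that Curtiss's upper bound for a ninth PPN is $K_9<S_{10}$, a $106$-digit number,'' and then move on to a weaker backup conjecture. So there is no paper proof to compare your proposal against.

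Your write-up is not a proof either, and you are candid about this: it is a sober assessment of what each clause would require and why the last one is likely out of reach. That assessment is accurate and in fact matches the paper's own stance. Your observation that the Curtiss bound makes existence and uniqueness of $K_9$ decidable in principle but computationally brutal is exactly the paper's one comment on the matter. Your skepticism that the residue pattern $K_r \equiv 6+36(r-2) \pmod{288}$ can be proved as a general theorem is also borne out by the paper: the authors pose precisely that as a separate open conjecture (Conjecture~\ref{CNJ: PPPmod288}) and remark that an explanation ``would be interesting to find.'' And your point that the bootstrapping constructions of Proposition~\ref{PROP: primes to PPNs} heuristically suggest infinitely many PPNs, making the ``no further PPNs'' clause the most doubtful, is a reasonable reading; the paper itself hedges by offering Conjecture~\ref{CNJ: PPPmod288} ``in case all or part of Conjecture~\ref{CNJ: PPP9} fails.''

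In short: there is nothing to grade here as a proof, because neither you nor the paper supplies one. Your discussion of obstructions is sound and aligns with the paper's framing of the statement as a speculative conjecture.
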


Anyone thinking of settling Conjecture \ref{CNJ: PPP9} by computation
should be aware that Curtiss's upper bound for a ninth PPN is $K_9<S_{10},$ a $106$-digit number.

In case all or part of Conjecture \ref{CNJ: PPP9} fails, we also predict a strengthening of Proposition~\ref{PROP: K == 6 mod 36} for all \PPN{}s divisible by~$6$, including those with more than eight prime factors, if any.

\begin{cnj} \label{CNJ: PPPmod288}
For all $r\ge2$, if $6\mid K_r$, then $K_r \equiv 6+6^2(r-2)\!\pmod{6^2\cdot 8}$. Equivalently (by Proposition~\ref{PROP: K == 6 mod 36}), if $K_r > 2$, then $K_r$ is a multiple of $6$ and
\begin{equation*}
	\frac{K_r -6}{6^2} \equiv r-2\pmod{8}.
\end{equation*}
\end{cnj}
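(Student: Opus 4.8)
I would work with the second, equivalent form of the conjecture: assuming $6\mid K_r$, show that $(K_r-6)/6^2\equiv r-2\pmod 8$. Write $K=K_r=6P$ with $P:=q_1q_2\cdots q_{r-2}$, where $q_1,\dots,q_{r-2}$ are the distinct prime factors of $K$ exceeding $3$ (every PPN being square-free). By Proposition~\ref{PROP: K == 6 mod 36} we may put $K=6+6^2m$ with $m\in\mathbb{Z}$, and then the target congruence $K\equiv 6+6^2(r-2)\pmod{6^2\cdot 8}$ is literally the statement $m\equiv r-2\pmod 8$, since $6^2\cdot 8$ divides $6^2\bigl(m-(r-2)\bigr)$ exactly when $8\mid m-(r-2)$.

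First I would identify $m=(K-6)/6^2$ combinatorially. Substituting $K/2=3P$, $K/3=2P$, and $K/q_j=6P_j$ (with $P_j:=P/q_j=\prod_{i\ne j}q_i$) into the integer form \eqref{EQ: integer def} collapses that equation to the clean identity
\[
	P \;=\; 1 \,+\, 6\sum_{j=1}^{r-2}P_j,
\]
whence $K-6=6(P-1)=6^2\sum_j P_j$ and $m=\sum_{j=1}^{r-2}P_j$. Thus Conjecture~\ref{CNJ: PPPmod288} is equivalent to the assertion that $\sum_{j=1}^{r-2}\prod_{i\ne j}q_i\equiv r-2\pmod 8$, a statement purely about the prime factors of $K$.

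Next I would reduce this modulo $8$. Every unit of $\mathbb{Z}/8\mathbb{Z}$ is its own inverse, so $P_j\equiv Pq_j\pmod 8$ and hence $\sum_j P_j\equiv Ps\pmod 8$, where $s:=q_1+\dots+q_{r-2}$. Feeding this back into the identity above forces $P(1+2s)\equiv 1\pmod 8$, i.e.\ $P\equiv 1+2s\pmod 8$, and then $\sum_j P_j\equiv s+2s^2\pmod 8$. Since each $q_j$ is odd we have $s\equiv r-2\pmod 2$, so the desired conclusion $s+2s^2\equiv r-2\pmod 8$ reduces to a single congruence for $s=\sum_j q_j$ modulo $8$ whose form depends only on the parity of $r$.

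\textbf{That last congruence is the obstacle, and I do not expect to be able to settle it.} The reduction above extracts from \eqref{EQ: PPP def} everything it contains modulo $8$ --- the coupling $P\equiv 1+2s\pmod 8$ between the product and the sum of the prime factors exceeding $3$ --- but it imposes no constraint on $s\bmod 8$ by itself, and a prime larger than $3$ can be congruent to $1$, $3$, $5$, or $7$ modulo $8$ with nothing in \eqref{EQ: PPP def} to forbid it. A genuine proof would therefore need a new ingredient: perhaps propagating $2$-adic information through the recursions $K\mapsto K(K+1)$ and $K\mapsto Kpq$ of Proposition~\ref{PROP: primes to PPNs} that link consecutive PPNs, or a direct analysis of \eqref{EQ: PPP def} over $\mathbb{Z}_2$. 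Absent such an idea, the statement should remain a conjecture; one can at least confirm it unconditionally for $r=2,\dots,8$, and --- granting that a $\K_9$ exists at all --- Conjecture~\ref{CNJ: PPP9} already forces $(\K_9-6)/6^2\equiv 7\pmod 8$.
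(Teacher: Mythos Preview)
The paper offers no proof of this statement: it is explicitly labelled a conjecture, and the only remark the authors make toward a proof is the informal observation that ``each such factor conjecturally contributes $1$ to $(K_r-6)/6^2$ modulo $8$ in some variant of the relation~\eqref{EQ: K/6}.'' There is therefore nothing in the paper for your argument to be compared against.

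That said, your reduction is correct and goes well beyond what the paper provides. The identity $P=1+6\sum_j P_j$ and the consequence $m=\sum_j P_j$ are right, the use of $q_j^{-1}\equiv q_j\pmod 8$ to obtain $\sum_j P_j\equiv s+2s^2\pmod 8$ is clean, and your final equivalent form---that the conjecture amounts to a congruence for $s=\sum_{j}q_j$ modulo $8$ depending only on the parity of $r$---is a genuinely sharper reformulation than anything in the paper. (Concretely, one checks that your reduction becomes $s\equiv r-2\pmod 8$ when $r$ is even and $s\equiv r-4\pmod 8$ when $r$ is odd, and these do hold for $r=2,\dots,8$.) Your assessment that the defining equation~\eqref{EQ: PPP def} alone cannot pin down $s\bmod 8$ is also sound: the mod-$8$ content of~\eqref{EQ: integer def} is exactly the coupling $P\equiv 1+2s\pmod 8$ you extracted, and nothing more. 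So your conclusion that the statement should remain a conjecture is the right one, and matches the paper's own stance.
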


Note that the case $r=9$ here is weaker than Conjecture~\ref{CNJ: PPP9}. Note also that the quantity $r-2$ equals the number of prime factors of $K_r$ different from $2$ and $3$. Thus, each such factor conjecturally contributes $1$ to $(K_r - 6)/6^2$ modulo $8$ in some variant of the relation~\eqref{EQ: K/6}.

Although the modulus $6^2\cdot 8$ cannot be changed in Proposition~\ref{PROP: PPP AP}, other moduli provide interesting \AP{}s for subsets of the \PPN{s}. For example, we have \AP{s} of complementary subsequences $K_2$, $K_4$, $K_6$, $K_8\!\pmod{128}=6, 14, 22, 30$ and $K_3$, $K_5$, $K_7\!\pmod{128}=42, 82, 122$, so that
\begin{align} \label{EQ:mod128}
	K_r\mspace{-15mu}\pmod{2^7} =
	\begin{cases}
		\, 6+4(r-2)& \text{ for }r=2,4,6,8,  \\[0.15em]
		\, 42+20(r-3)& \text{ for }r=3,5,7.
	\end{cases}
\end{align}

Finally, we give a way to generate triples of \PPN{}s congruent modulo $6^3\cdot4=864$ to $3$-term \AP{s}.

\begin{prop} \label{PROP:triple}
Let $K$ be a \PPN{} such that $K+1$ and $K^2 + K+1$ are prime. Then the products $K':=K(K+1)$ and $K'':=K'(K'+1)$ are also \PPN{}s, and 
\begin{align} \label{EQ:mod432}
	K\equiv 0\hspace{-0.75em}\pmod{6} \implies K,K',K''\equiv K,K+6^2,K+6^2\cdot 2 \hspace{-0.75em}\pmod{6^3\cdot4},
\end{align}
respectively.
\end{prop}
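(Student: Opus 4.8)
The plan is to treat the two assertions in turn. That $K'$ and $K''$ are \PPN{s} follows by applying Proposition~\ref{PROP: primes to PPNs}(i) twice. Since $K+1$ is prime, part~(i) applied to $K$ shows $K' = K(K+1)$ is a \PPN{}. To iterate, I would note that $K'+1 = K^2+K+1$, which is prime by the second hypothesis; hence part~(i) applied to $K'$ shows $K'' = K'(K'+1)$ is a \PPN{} as well.

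For the congruence \eqref{EQ:mod432}, the first step is to observe that $6 \mid K$ forces $6 \mid K'$ and $6 \mid K''$, so—using that $K$, $K'$, $K''$ are now known to be \PPN{s}—Proposition~\ref{PROP: K == 6 mod 36} gives $K \equiv K' \equiv 6 \pmod{6^2}$. The crux is then an elementary lemma: if $N \equiv 6 \pmod{6^2}$, then $N^2 \equiv 6^2 \pmod{6^3\cdot 4}$. To see this, write $N = 6 + 6^2 t$; expanding gives $N^2 = 6^2 + 2\cdot 6^3 t + 6^4 t^2$, and since $6^4 t^2 \equiv 2\cdot 6^3 t^2 \pmod{6^3\cdot 4}$ this is congruent to $6^2 + 6^3(2t + 2t^2) = 6^2 + 2\cdot 6^3\, t(t+1) \pmod{6^3\cdot 4}$; as $t(t+1)$ is even, the last term vanishes modulo $6^3\cdot 4$, leaving $N^2 \equiv 6^2 \pmod{6^3\cdot 4}$.

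Finally I would assemble the pieces. Because $K' - K = K^2$ and $K$ satisfies the hypothesis of the lemma, $K' \equiv K + 6^2 \pmod{6^3\cdot 4}$. Because $K'' - K' = (K')^2$ and $K'$ likewise satisfies $K' \equiv 6 \pmod{6^2}$, the lemma gives $K'' \equiv K' + 6^2 \equiv K + 6^2\cdot 2 \pmod{6^3\cdot 4}$, which is exactly the claimed progression. There is no real obstacle here; the only point requiring a little care is the bootstrap—one must first establish, from the first half of the proposition, that $K'$ is again a \PPN{} divisible by $6$ before Proposition~\ref{PROP: K == 6 mod 36}, and hence the lemma, can be applied to $K'$ in the last step.
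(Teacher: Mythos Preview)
Your proof is correct and follows essentially the same route as the paper's. Both apply Proposition~\ref{PROP: primes to PPNs}(i) twice for the first assertion, then use Proposition~\ref{PROP: K == 6 mod 36} to write $K=6+6^2 n$ and expand $K^2$ to show $K'-K\equiv 6^2\pmod{6^3\cdot4}$, repeating the computation for $K''-K'$; your packaging of the squaring step as a separate lemma and your explicit check that $K'$ is a \PPN{} divisible by~$6$ before reapplying Proposition~\ref{PROP: K == 6 mod 36} are organizational rather than substantive differences.
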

\begin{proof}
Since $K+1$ and $K'+1=K^2 + K+1$ are prime, Proposition~\ref{PROP: primes to PPNs} part (i) implies that $K'$ and $K''$ are also \PPN{}s. As $6\mid K$, Proposition~\ref{PROP: K == 6 mod 36} gives $K=6+ 6^2n$, for some~$n$. Now, we can write
\begin{equation*}
	K'-K = K^2 =6^2+ 6^2\cdot4\cdot3n(3n+1)\equiv6^2\!\pmod{6^3\cdot4},
\end{equation*}
because $3n(3n+1)$ is even. In the same way we get $K''-K'\equiv 6^2\!\pmod{6^3\cdot4}$, and \eqref{EQ:mod432} follows.
\end{proof}

The only known example of Proposition~\ref{PROP:triple} is with $K=6$. The \ppp{}s $K,K',K''$ are then $6,42,1806$, whose remainders modulo $6^3\cdot4$ form the $3$-term arithmetic progression $6,42,78$. Compare to Proposition~\ref{PROP: PPP AP} for $r=2,3,4$.

It would be interesting to find explanations and extensions to all \PPN{}s, analogous to the statements and proofs of Propositions~\ref{PROP: primes to PPNs}, \ref{PROP: K == 6 mod 36}, and~\ref{PROP:triple}, for the \AP{}s of certain $K_r$ modulo $6^2\cdot 8$ and $2^7$ in \eqref{EQ:mod288} and \eqref{EQ:mod128}, respectively.

\section{{\bf THE ERD\H{O}S-MOSER CONJECTURE AND A CONDITIONAL RABBIT.}}  \label{SEC: EMC}
Erd\H{o}s and Moser (EM for short) studied equation~\eqref{EQ: EME} around $1953$ and made the following prediction.

\begin{cnj}[{\bf EM}] \label{CNJ: EMC}
The only solution to the EM equation~\eqref{EQ: EME} in positive integers is the trivial solution \mbox{$1^1 + 2^1 = 3^1$}.
\end{cnj}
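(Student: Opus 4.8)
The plan is to upgrade the conditional lower-bound argument of this section into an unconditional nonexistence statement, by combining the \PPN{} structure theory of Sections~\ref{SEC: PPNS}--\ref{SEC: APS} with the classical power-sum and Diophantine-approximation analysis of~\eqref{EQ: EME}. First I would recall Moser's reduction of a hypothetical \emph{non-trivial} solution $(k,n)$ to arithmetic constraints. Evaluating the power sum $\sum_{j=1}^{k} j^n$ modulo a prime $p$, and using the standard fact that $\sum_{j=1}^{p-1} j^n \equiv -1$ or $0 \pmod p$ according as $(p-1)\mid n$ or not (equivalently, the von Staudt--Clausen description of Bernoulli denominators), one forces $n$ to be even and forces the relevant integers built from $k$ (such as $2k+1$) to be square-free. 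The essential output is a unit-fraction congruence of \emph{exactly} the \PPN{} shape~\eqref{EQ: PPP def}, $\tfrac1m + \sum_{p\mid m}\tfrac1p \equiv 1$, for a square-free modulus $m$ assembled from $k$; thus any solution manufactures a \PPN{}-like object.

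Next I would feed the conjectured classification of \PPN{}s into this reduction and run the analytic half in parallel. By Proposition~\ref{PROP: PPP AP} and Conjectures~\ref{CNJ: PPP9} and~\ref{CNJ: PPPmod288}, every \PPN{} lies in the short residue progression $6+6^2(r-2)\pmod{6^2\cdot 8}$ and, if the list is complete, has one of finitely many factorizations; pushing these residues (and those of~\eqref{EQ:mod128}) back through Moser's congruences pins $k$ to a thin arithmetic set modulo~$288$. On the analytic side, dividing~\eqref{EQ: EME} by $(k+1)^n$ and comparing with the geometric series $\sum_{i\ge1} e^{-ni/(k+1)}$ shows that $n/(k+1)$ must approximate $\ln 2$ to a precision controlled by $k$, so that $k+1$ behaves like a convergent denominator of $\ln 2$. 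The endgame is to prove that no such convergent can also satisfy the thin arithmetic conditions: since the convergents of $\ln 2$ are explicit and their residues modulo $288$ are computable, one would show the two families of constraints are incompatible for every $k$, yielding the desired contradiction and hence nonexistence.

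The hard part --- and precisely the reason \EMC{} remains open --- is this final, \emph{uniform} incompatibility. The \PPN{} route, like all known attacks, produces only lower bounds $k>B$ and never an upper bound, so nonexistence cannot be obtained by squeezing $k$ between two estimates; the contradiction must instead be derived uniformly in $k$. That in turn demands an \emph{unconditional} proof of the finiteness and residue conjectures of Section~\ref{SEC: APS}, married to an effective, uniform obstruction between the admissible \PPN{}-residues and the convergents of $\ln 2$. Lacking both, the method delivers only the record \emph{conditional} lower bound of this section rather than a proof of the full conjecture.
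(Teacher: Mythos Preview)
The statement is Conjecture~\ref{CNJ: EMC}, an \emph{open problem}; the paper does not prove it and no proof is known. There is therefore no ``paper's proof'' to compare against\emdash the paper only records Moser's lower bound (Theorem~\ref{THM: Moser}) and the \emph{conditional} bound of Proposition~\ref{PROP:EM bound}. You acknowledge as much in your final paragraph, so what you have written is a programme, not a proof.

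Beyond that honest concession, there is a structural gap in your middle step. Moser's reduction (Proposition~\ref{LEM: which k and n}) forces $\tfrac{1}{k}+\sum_{p\mid k}\tfrac{1}{p}\in\mathbb{Z}$, but it does \emph{not} force this integer to equal~$1$; indeed, the whole point of Proposition~\ref{PROP:EM bound} is that under the finiteness hypothesis the integer is $\ge2$, so that $k$ is \emph{not} a \PPN{}. In that case the residue constraints of Proposition~\ref{PROP: PPP AP} and Conjectures~\ref{CNJ: PPP9}--\ref{CNJ: PPPmod288} say nothing whatsoever about $k\pmod{288}$: they constrain \PPN{}s, and $k$ is not one. (And if $k$ \emph{were} a \PPN{}, the assumed finiteness would bound $k$ by $K_9$, contradicting Theorem~\ref{THM: Moser} outright\emdash no residue analysis needed.) Your claim that the \PPN{} residues ``pin $k$ to a thin arithmetic set modulo~$288$'' therefore has no support. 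Even if some residue restriction on $k$ could be salvaged, the proposed endgame\emdash ruling out every convergent denominator of $\ln 2$ in a fixed class modulo~$288$\emdash has no known mechanism: the partial quotients of $\ln 2$ are not periodic, and nothing prevents its convergents from hitting any prescribed residue class.
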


Moser proved the following result toward Conjecture \ref{CNJ: EMC}.

\begin{thm}[{\bf Moser \cite{Moser}}] \label{THM: Moser}
If $(k,n)$ is a non-trivial solution of \eqref{EQ: EME}, then $k>10^{10^{6}}$.
\end{thm}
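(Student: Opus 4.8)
The plan is to reconstruct Moser's classical argument, which proceeds entirely through congruence and size restrictions forced on any non-trivial solution $(k,n)$ of \eqref{EQ: EME}. First I would record the basic parity and divisibility observations: pairing the terms $j^n$ and $(k-j)^n$ on the left-hand side shows that $n$ must be even (otherwise the sum telescopes to something manifestly too small modulo $k$), and then a short argument gives that $k+1$ and $k-1$ must both be prime-free of small factors, more precisely that $k$ is even and $n$ is even. The crucial device is to rewrite \eqref{EQ: EME} as
\begin{equation*}
	(k+1)^n - k^n = 1^n + 2^n + \dotsb + (k-1)^n,
\end{equation*}
and to observe that the right-hand side is, up to lower-order terms, governed by the von Staudt--Clausen type behaviour of power sums: for each prime $p \le k-1$ one can control $\sum_{j=1}^{k-1} j^n$ modulo $p$ because $\sum_{j=1}^{p-1} j^n \equiv -1$ or $0 \pmod p$ according to whether $p-1 \mid n$.

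The heart of the proof is then the following chain. From the power-sum congruences one deduces that $\sum_{j=1}^{k-1} j^n \equiv -\!\!\sum_{\substack{p\,\le\, k-1 \\ p-1\,\mid\, n}} \frac{k-1}{p}\ $ modulo small moduli, and comparing with $(k+1)^n - k^n \equiv 1 \pmod{k}$ forces a relation of unit-fraction type: the set of primes $p$ with $p-1 \mid n$ (together with a correction term) must satisfy an equation closely resembling \eqref{EQ: PPP def}, which in turn pins down the $2$-adic and $3$-adic valuations of the relevant quantities and shows $2^a \| k$-type constraints. Iterating this — each new prime $p$ dividing the modulus contributes a further congruence condition on $k$ and on $n$ — one builds up a modulus $M$ that $k$ must satisfy a prescribed residue against, and simultaneously a lower bound $N$ such that $n \ge N$. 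Moser's bookkeeping then combines these: the primes forced into the analysis have product exceeding a certain bound, and feeding that product back into the inequality $(k+1)^n > 1^n + \dotsb + k^n$ (which is essentially $(1+1/k)^n > $ a quantity comparable to $k/(n-1) \cdot$ something, via the integral comparison $\sum_{j=1}^k j^n < \int_0^{k+1} x^n\,dx$) yields, after taking logarithms, $\log k > 10^{6}\log 10$, i.e. $k > 10^{10^6}$.

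The main obstacle — and the step I expect to be most delicate — is the simultaneous bootstrapping of the modulus for $k$ and the lower bound for $n$: one must show that the prime-by-prime congruence conditions are genuinely independent and compatible (so that the modulus really does grow to the claimed size rather than collapsing to a contradiction prematurely), and one must track the error terms in the power-sum congruences carefully enough that the final inequality survives. In particular, controlling $\sum_{j=1}^{k-1} j^n \pmod{p^2}$ (not just $\pmod p$) for the small primes $p \in \{2,3\}$, where the naive congruence is not sharp enough, is where the real work lies; this is exactly the point at which the structure resembling primary pseudoperfect numbers enters, and it is the analogue of this step — pushing from modulus $p$ to higher prime-power moduli, and from finitely many primes to a record-breaking product — that Section~\ref{SEC: EMC} will later refine using \EMC. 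I would not reproduce Moser's numerical constants here; instead I would cite \cite{Moser} for the detailed estimates and present only the skeleton above, since the later sections supersede the bound anyway.
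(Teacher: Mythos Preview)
The paper does not prove Theorem~\ref{THM: Moser} at all: it is stated as Moser's result, attributed to \cite{Moser}, and left without argument. So there is no proof in the paper to compare your sketch against; your proposal to outline Moser's reasoning and then cite \cite{Moser} for the numerics already goes beyond what the paper does.

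That said, your sketch of Moser's method drifts at the crucial point. The congruence analysis you describe is right in spirit: reducing the power sum modulo various primes via von Staudt--Clausen does force a unit-fraction relation, and this is exactly how Proposition~\ref{LEM: which k and n} and the inclusion~\eqref{EQ: frac mod 1} enter. But the final numerical bound does \emph{not} come from ``feeding a prime product back into the inequality $(k+1)^n > 1^n + \dotsb + k^n$'' together with an integral comparison $\sum j^n < \int x^n\,dx$. Rather --- and you can see the skeleton of this in the paper's proof of Proposition~\ref{PROP:EM bound}, which explicitly follows Moser's analysis --- the congruence conditions force
\[
\sum_{p\,\mid\, M} \frac{1}{p} \ \ge\ C
\]
for an explicit constant $C$ (Moser obtained $C = 3\tfrac{1}{6}$; compare \eqref{EQ: Moser}), where $M = (m^2-1)(4m^2-1)/12$ and $m = k+1$. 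This inequality alone forces $M$ to exceed $\prod_{p\le x} p$ for $x$ determined by $C$ via Mertens-type estimates, and since $M$ is a degree-four polynomial in $k$ one reads off the lower bound on $k$ directly. There is no simultaneous bootstrap between a modulus for $k$ and a lower bound for $n$, and no need to work modulo $p^2$; Moser's argument stays entirely at the level of which primes divide $M$ and how large $\sum_{p\mid M} 1/p$ must be. Your description of ``the main obstacle'' is therefore aimed at a difficulty that does not actually arise.
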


This bound was improved to $k>10^{1.485\times9321155}$ in \cite{Butske}, and to $k>10^{10^{9}}$ by Gallot, Moree, and Zudilin \cite{GMZ} (see also \cite[Chapter~8]{BPSZ}). On the other hand, it is not even known whether the number of solutions is finite. See the surveys~\cite[D7]{Guy} and~\cite{MoreeTophat}.

In \cite{sm} the authors approximated the EM equation by the \emph{EM congruence}
\begin{equation}
    1^n + 2^n + \dotsb +(k-1)^n+ k^n \equiv (k+1)^n \pmod{k}, \label{EQ: EMCong}
\end{equation}
as well as by the supercongruence modulo $k^2$, and proved the following connection with \PPN{}s.

\begin{prop} \label{LEM: which k and n}
The EM congruence~\eqref{EQ: EMCong} holds if and only if the inclusion
\begin{equation}
    \frac{1}{k} + \sum_{\subPk} \frac{1}{p} \in \mathbb{Z}  \label{EQ: frac mod 1}
\end{equation}
is true and $p\mid k$ implies $(p-1)\mid n$. In particular, every \ppp{} $K$ provides a solution $k := K$ to~\eqref{EQ: EMCong} with exponent $n:= \lcm\{p-1:p\mid K\}$. 
\end{prop}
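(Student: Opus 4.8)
The plan is to prove the biconditional by a direct computation reducing the power sum $\sum_{i=1}^k i^n$ modulo $k$ to a sum over prime divisors of $k$, and then to read off the special case. First I would recall the standard fact that for a prime power $p^a$ and any positive integer $m$, the sum $\sum_{i=1}^{p^a} i^m$ is congruent modulo $p^a$ to $-p^{a-1}$ if $(p-1)\mid m$ and to $0$ otherwise; this follows from splitting the range into residue classes mod $p$ and using that the sum of all $m$-th powers of a full set of units mod $p^a$ is $\equiv -1$ when $(p-1)\mid m$ and $\equiv 0$ otherwise (a consequence of the cyclicity of $(\mathbb{Z}/p^a)^\times$), together with handling the multiples of $p$ recursively. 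By the Chinese Remainder Theorem, if $k=\prod p_i^{a_i}$ then $\sum_{i=1}^k i^n \bmod k$ is determined by its residues mod each $p_i^{a_i}$, and on that component only the terms $i$ coprime to the other prime factors contribute a nonzero amount; unwinding this, one obtains
\begin{equation*}
	\sum_{i=1}^k i^n \equiv \sum_{\substack{\subPk \\ (p-1)\mid n}} \left(-\frac{k}{p}\right) \pmod{k}.
\end{equation*}
(Alternatively, cite the derivation of this identity already used in \cite{sm}, which is where the EM congruence was first compared with \eqref{EQ: PPP def}; I expect the cleanest writeup simply to quote that computation.)

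Next I would observe that $(k+1)^n \equiv 1 \pmod{k}$, so the EM congruence \eqref{EQ: EMCong} is equivalent to
\begin{equation*}
	1 + \sum_{\substack{\subPk \\ (p-1)\mid n}} \frac{k}{p} \equiv 0 \pmod{k}.
\end{equation*}
Dividing formally by $k$, this says $\frac{1}{k} + \sum_{\subPk,\ (p-1)\mid n} \frac{1}{p} \in \mathbb{Z}$. The implication ``$\Leftarrow$'' is then immediate: if \eqref{EQ: frac mod 1} holds and $\subPk$ forces $(p-1)\mid n$, the restricted sum equals the full sum in \eqref{EQ: frac mod 1}, which is an integer. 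For ``$\Rightarrow$'', suppose the EM congruence holds but some prime $q\mid k$ has $(q-1)\nmid n$. Multiply the displayed congruence by $q/k$ (an integer coprime to $q$, since $k$ is not necessarily squarefree one uses $q^{a}\,\|\,k$ and notes $k/q$ carries exactly $a-1$ factors of $q$, so actually one should rather argue $q$-adically): the term $q\cdot\frac{1}{k}\cdot\frac{k}{q}=1$ survives modulo $q$ while every term $\frac{k}{p}$ with $p\neq q$ is divisible by $q$, giving $1\equiv 0\pmod q$, a contradiction. Hence every $\subPk$ satisfies $(p-1)\mid n$, and then the restricted and unrestricted sums coincide, so \eqref{EQ: frac mod 1} holds.

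For the final assertion, let $K$ be a PPN and set $k:=K$, $n:=\lcm\{p-1:p\mid K\}$. Equation \eqref{EQ: PPP def} gives $\frac{1}{K}+\sum_{\subPK}\frac{1}{p}=1\in\mathbb{Z}$, so \eqref{EQ: frac mod 1} holds; and by construction $\subPK$ implies $(p-1)\mid n$. By the biconditional just proved, the EM congruence holds with this $k$ and $n$.

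The main obstacle is the power-sum lemma modulo a non-squarefree $k$: getting the contribution of each prime power exactly right (the factor $-p^{a-1}$, and the bookkeeping of which $i$ in $\{1,\dots,k\}$ are coprime to which prime factors) requires care, and the cleanest route is probably to reduce immediately to the squarefree case — noting, as \eqref{EQ: integer def} shows, that \eqref{EQ: frac mod 1} already forces $k$ squarefree once one is in the PPN setting — or simply to invoke the corresponding computation from \cite{sm}. Everything after that identity is elementary $p$-adic bookkeeping.
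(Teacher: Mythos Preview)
The paper does not actually prove this proposition: it is stated with a pointer to \cite{sm}, where the authors established it, followed only by the remark that part of the implication is implicit in Moser's original paper. So there is no in-paper argument to compare against; your reconstruction is precisely the kind of proof one expects \cite{sm} to contain, and you yourself note that the cleanest route is to quote the power-sum computation from that reference.

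Your argument is correct in substance. The key identity
\[
\sum_{i=1}^{k} i^{n} \equiv -\sum_{\substack{p\mid k\\ (p-1)\mid n}} \frac{k}{p} \pmod{k}
\]
holds for arbitrary $k$ (not just squarefree $k$): modulo each $p^{a}\,\|\,k$ the sum runs over $k/p^{a}$ complete residue systems, and the von Staudt--Clausen--type evaluation $\sum_{i=1}^{p^{a}} i^{n}\equiv -p^{a-1}$ or $0\pmod{p^{a}}$ yields exactly $-k/p$ or $0$. The CRT reassembly then gives the displayed sum, since $k/p\equiv 0\pmod{q^{b}}$ whenever $q\neq p$.

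One expository wrinkle: in the ``$\Rightarrow$'' direction you write ``multiply the displayed congruence by $q/k$,'' which is not an integer, and then retreat into a parenthetical about arguing $q$-adically. Just reduce the integer congruence $1+\sum_{(p-1)\mid n} k/p\equiv 0\pmod{k}$ modulo $q$: every surviving summand $k/p$ has $p\neq q$, hence is divisible by $q$, forcing $1\equiv 0\pmod{q}$. That is the whole argument; the detour through $q/k$ and the squarefreeness discussion can be deleted.
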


Part of this is implicit in \cite{Moser}: Moser's work shows that \eqref{EQ: EME} implies \eqref{EQ: frac mod 1}; see \cite[p. 409]{Butske}.

In \cite{MoreeTophat} Moree wrote, ``In order to improve on [Theorem~\ref{THM: Moser}] by Moser's approach one needs to find additional rabbit(s) in the top hat. The interested reader is wished good luck in finding these elusive animals!'' Moree's top hat is a von Staudt-Clausen type theorem. Instead, we find a conditional rabbit in a hypothesis weaker than Conjecture~\ref{CNJ: PPP9}.

\begin{prop} \label{PROP:EM bound}
If there are no primary pseudoperfect numbers $K_r$ with $r \ge 33$, and if the \EME~\eqref{EQ: EME} has a non-trivial solution $(k,n)$, then $k>10^{3.99\times10^{20}}\!$.
\end{prop}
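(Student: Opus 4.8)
The plan is to run Moser's argument for the \EME{} in the sharpened form of \cite{Butske} and \cite{GMZ}, using the hypothesis on primary pseudoperfect numbers \emph{}— through Curtiss's bound (Proposition~\ref{PROP: primes to PPNs}(iv)) — to close off the one case, namely $k$ itself being a \ppp{}, in which the unconditional argument stops improving. First I would collect the standard consequences of a non-trivial solution $(k,n)$ of~\eqref{EQ: EME}. Comparing the power sum with an integral, $(k+1)^n=\sum_{i=1}^{k}i^n<\int_{1}^{k+1}x^n\,dx=\frac{(k+1)^{n+1}-1}{n+1}$, and cancelling $(k+1)^n$, yields the crude but indispensable inequality $n<k$. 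Reducing~\eqref{EQ: EME} modulo $k$, $k+1$ and $2k+1$ — pairing $i$ with $-i$ to force $n$ even, and invoking the von Staudt--Clausen-type congruence $\sum_{i=1}^{m-1}i^n\equiv-\sum_{p\mid m,\,(p-1)\mid n}\frac{m}{p}\pmod{m}$ for squarefree $m$ — shows that $k$, $k+1$, $2k+1$ are squarefree, that every prime $p$ dividing $k(k+1)(2k+1)$ satisfies $(p-1)\mid n$, and that each of these numbers meets a unit-fraction integrality condition. For $k$ itself the condition is, by Proposition~\ref{LEM: which k and n} (cf.\ \cite{Moser} and \cite[p.~409]{Butske}), that $c:=\frac1k+\sum_{p\mid k}\frac1p\in\mathbb Z_{\ge1}$; since $k$ is squarefree, a greedy argument then gives $k\ge\prod_{p\le y}p=e^{(1+o(1))y}$ with $\sum_{p\le y}1/p$ just above $c-1$, so by Mertens' theorem $\log\log\log k\gtrsim c$ up to an additive constant. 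Thus $k$ grows triply exponentially in $c$, which is why even a modest gain in $c$ produces the enormous bound below.

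The hypothesis enters when excluding $c=1$. If $c=1$, then $k$ is a \ppp{} with, say, $r=\omega(k)$ prime factors, so Curtiss's bound gives $k\le S_{r+1}-1$; since $\log_{10}S_m=2^{m}\log_{10}(1.2640\ldots)+o(1)$, the known bound $k>10^{10^9}$ of \cite{GMZ} forces $2^{r+1}(0.1017\ldots)>10^9$, hence $r\ge33$, contradicting the hypothesis. (This is exactly what fixes the threshold $33$: $S_{33}$ has only about $8.7\times10^8$ digits, just under $10^9$.) So $c\ge2$, and one may now push Moser's machinery further: $c\ge2$ forces a block of small primes into $k$, every such prime $p$ has $(p-1)\mid n$, so $\lcm\{p-1:p\mid k\}\mid n<k$; and playing this divisibility against the analogous unit-fraction conditions for $k+1$, $2k+1$ and further linear forms $dk\pm1$ — each again divisible by a substantial block of primes with controlled reciprocal sum — drives the lower bound on $k$ upward, the hypothesis being re-invoked (with the same Curtiss calibration, now applied to larger candidate \PPN{s}, so again $\omega\ge33$) whenever a genuine primary pseudoperfect number would otherwise terminate the chain. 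Carrying the constants in the Mertens and Chebyshev estimates through this process, and using Linnik's theorem on the least prime in an arithmetic progression to ensure the required prime powers actually divide $\lcm\{p-1\}$, one obtains $k>10^{3.99\times10^{20}}$.

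The main obstacle is precisely this explicit bookkeeping: selecting which linear forms $dk\pm1$ to reduce against, tracking which primes are genuinely forced to divide $k$ as opposed to merely satisfying $(p-1)\mid n$, and keeping the error terms sharp enough that the final exponent comes out as $3.99\times10^{20}$ rather than as some larger, unspecified astronomical number. A secondary subtlety is that the alternatives $c=2,3,\dots$, where $k$ is no longer literally a \PPN{}, must still both feed the iteration and be shown incompatible with the target; this requires either a Curtiss-type inequality for the higher equations $\frac1K+\sum_{p\mid K}\frac1p=c$ or a direct primorial estimate, used together with the hypothesis each time a true primary pseudoperfect number reappears along Moser's chain.
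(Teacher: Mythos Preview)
Your proposal gets the pivotal step right: the hypothesis on \PPN{}s is used exactly once, to rule out $c:=\frac1k+\sum_{p\mid k}\frac1p=1$ and force $c\ge2$. Your route to this via Curtiss's bound and the unconditional GMZ inequality $k>10^{10^9}$ is valid and nicely explains the threshold $33$; the paper instead quotes \cite[Section~5.1]{GMZ} for the ready-made fact $\omega(k)\ge33$, which together with the hypothesis immediately kills $c=1$. Either way one arrives at $c\ge2$.

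Where your plan goes astray is everything after that point. There is no iteration, no Linnik, no re-invoking of the hypothesis, and no need to analyze $c=2,3,\dotsc$ separately. The paper simply inserts $c\ge2$ into Moser's original inequality: the effect is to replace the classical right-hand side $3\tfrac{1}{6}$ in
\[
\frac{1}{m-1}+\frac{2}{m+1}+\frac{2}{2m-1}+\frac{4}{2m+1}+\sum_{p\mid M}\frac1p\ \ge\ 3\tfrac16
\]
(where $m=k+1$ and $M=(m^2-1)(4m^2-1)/12$) by $4\tfrac16$. Since $k>2^{33}$, the four leading fractions are negligible and one gets $\sum_{p\mid M}1/p>4.166666$. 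Dusart's explicit Mertens estimate then shows $\sum_{p\le x_0}1/p<4.166666$ for $x_0=3.6769\times10^{21}$, so $M>\prod_{p\le x_0}p$; Dusart's explicit Chebyshev bound gives $\log M>3.6768\times10^{21}$; and finally $3M<(k+1)^4$ yields $\log(k+1)>9.192\times10^{20}$, i.e.\ $k>10^{3.99\times10^{20}}$. That is the entire second half of the proof. The ``main obstacle'' and ``secondary subtlety'' you flag are artifacts of the overcomplicated scheme, not of the problem itself.
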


\begin{proof}
In \cite[Section 5.1]{GMZ} it is shown that if $(k,n)$ is a solution of \eqref{EQ: EME} with $n>1$, then the number of distinct prime factors of $k$ is at least $33$. Thus if no $K_r$ exists with $r\ge33$, then by Proposition~\ref{LEM: which k and n} the left-hand side of \eqref{EQ: frac mod 1}
cannot equal 1 and so, being a positive integer, must be $\ge2$. In the analysis of Moser's proof, this leads now to the inequality
\begin{equation}
	 \frac{1}{m-1} + \frac{2}{m+1} + \frac{2}{2m-1} + \frac{4}{2m+1} +  \sum_{p\mid M} \frac{1}{p}  \ge 4\frac{1}{6}  \label{EQ: Moser}
\end{equation}
(instead of $\ge3\frac{1}{6}$ as in \cite[equation (14)]{MoreeTophat} and \cite[equation (19)]{Moser}), where $m-1=k$ and $M=(m^2-1)(4m^2-1)/12.$ Now, $m-1=k>2^{33}>8\times10^9$ and so \eqref{EQ: Moser} implies
\begin{equation}
	\sum_{p\,\mid\, M} \frac{1}{p} >  4.166666. \label{EQ: 4.16}
\end{equation}
\emph{From \eqref{EQ: 4.16} it follows that $M>\prod_{p\le x} p$ if} $\sum_{p\le x} \frac{1}{p} < 4.166666$.
We show that the last inequality in turn holds if $x=x _0:= 3.6769\times10^{21}$. First, recall that the theorem of Mertens states that $\lim_{x\to\infty}(\sum_{p\le x} \frac{1}{p} - \log \log x) = B_1$, where $B_1 = 0.261497\dotso$ is Mertens's constant \cite[A077761]{Sloane}. Now, with $x=x_0$ compute Dusart's explicit form of Mertens's theorem \cite[Theorem 6.10]{Dusart}, namely,
\begin{align}
	\Bigg\lvert\sum_{p\, \le\, x} \frac{1}{p} - \log \log x - B_1 \Bigg\rvert \le  \frac{1}{10\log^2 x} + \frac{4}{15\log^3 x} \qquad (x \ge 10372).  \label{EQ: Dusart}
\end{align}

In \cite[Theorem 5.2]{Dusart} Dusart also proved that
\begin{align*}
	\sum_{p\, \le\, x} \log p > \left(1 - \frac{1}{\log^3 x}\right)x \qquad (x \ge 89 967 803).
\end{align*}
Hence
\begin{align*}
	\log M > \log\! \prod_{p\,\le\, x_0} \!p \,
=\sum_{p\,\le\, x_0} \!\log p > \left(1 - \frac{1}{\log^3 x_0}\right)x_0 > 3.6768\times10^{21}.
\end{align*}

\noindent Now, $3M < m^4 = (k+1)^4$, so $\log (k+1) > (\log3+\log M)/4 > 9.192\times10^{20}$. Therefore $k > e^{9.19\times10^{20}} > 10^{3.99\times10^{20}}\!$. This proves the proposition.
\end{proof}

\begin{remark*}
If we assume the Riemann Hypothesis, then we may replace~\eqref{EQ: Dusart} with Schoenfeld's conditional inequality \cite{Schoenfeld}
\begin{align*}
	\Bigg\lvert\sum_{p\,\le\, x} \frac{1}{p} - \log \log x - B_1 \Bigg\rvert \le \frac{3 \log x + 4}{8\pi \sqrt{x}} \qquad (x \ge 13.5)
\end{align*}
(see \cite[equation (7.1)]{BKS}), and infer that $\sum_{p\le x_1} \frac{1}{p} < 4.166666$ if $x _1:= 3.6847\times10^{21}$. Using $x_1$ in place of $x_0$ in the rest of the proof, we arrive at the slightly better, but doubly conditional bound $k>10^{4\times10^{20}}\!$.
\end{remark*}

\section*{{\bf Acknowledgments.}} The authors are very grateful to the referee for several  suggestions and references, especially those which led to Proposition~\ref{PROP:EM bound}. We thank Wadim Zudilin for an improvement in Conjecture~\ref{CNJ: PPPmod288}.


\bigskip

\noindent 209 West 97th Street, New York, NY, 10025\\
\noindent \url{jsondow@alumni.princeton.edu}\\

\noindent 55 Lessard Avenue, Toronto, Ontario, Canada~M6S 1X6\\
\noindent \url{kieren@alumni.rice.edu}

\end{document}